\newtheorem{pro}{Proposition}[section]
\newtheorem{thm}[pro]{Theorem}
\newtheorem{lem}[pro]{Lemma}
\theoremstyle{definition}
\newtheorem{question}[pro]{Question}
\newtheorem{questions}[pro]{Questions}
\newtheorem{dfn}[pro]{Definition}
\newtheorem{nota}[pro]{Notation}
\newtheorem{dfns}[pro]{Definitions}
\newtheorem{rmkk}[pro]{Remark}
\theoremstyle{remark}
\newtheorem*{rmk}{Remark}
\newcommand{\T}{\mathcal T}
\newcommand{\s}{\Sigma}
\newcommand{\del}{\partial}
\newcommand{\bdd}{76\tm + 26}
\newcommand{\rt}{f_{r,t}} 
\newcommand{\rqq}{f_{r,q}} 
\newcommand{\lrb}{{\bf lr[b]}}
\newcommand{\lry}{{\bf lr[y]}}
\newcommand{\tm}{{t}}
\def\tc[#1]{t_{c}(#1)}
\title[Hyperbolic volume and {H}eegaard distance]{Hyperbolic volume and {H}eegaard distance}
\date{\today}
\address{Department of Mathematics, Nara Women's University
Kitauoya Nishimachi, Nara 630-8506, Japan}
\address{Department of mathematical Sciences, University of
Arkansas, Fayetteville, AR 72701}
\email{tsuyoshi@cc.nara-wu.ac.jp}
\email{yoav@uark.edu}
\author{Tsuyoshi Kobayashi}
\author{Yo'av Rieck}
\thanks{The first names author was supported by Grant-in-Aid for scientific
  research, JSPS grant number 19540083. The second named author was supported
  in part by the 21st century COE program ``Constitution for wide-angle
  mathematical basis focused on knots" (Osaka City University); leader: Akio
  Kawauchi. 
}
\begin{document}

\subjclass{57M99, 57M25}%
\keywords{3-manifold, Heegaard splittings, Heegaard distance}%

\def\vol[#1]{\mbox{\rm Vol}(#1)}

\date{\today}%
\begin{abstract}
We prove  (Theorem~\ref{thm:hyperbolic})  that there exists a constant $\Lambda > 0$ so that
if $M$ is a $(\mu,d)$-generic complete hyperbolic 3-manifold of volume $\vol[M] < \infty$ and 
$\s \subset M$ is a Heegaard surface of genus $g(\s) > \Lambda \vol[M]$,
then $d(\s) \leq 2$, where $d(\s)$ denotes the distance of $\s$ as defined by Hempel.  
The term $(\mu,d)$-generic
is described precisely in Definition~\ref{dfn:generic}; see also Remark~\ref{rmk:generic}.

The key for the proof of Theorem~\ref{thm:hyperbolic} is Theorem~\ref{thm}
which is on independent interest.
There we prove that if $M$ is a compact 3-manifold that can be triangulated
using at most $t$ tetrahedra (possibly with missing or truncated vertices),
and $\s$ is a Heegaard surface for $M$ with $g(\s) \geq \bdd$, then $d(\s) \leq 2$.

\end{abstract}
\maketitle

\section{Introduction}
\label{sec:intro}

All the manifolds considered in this paper are 3-dimensional, compact, connected,
and orientable.  By {\it hyperbolic manifold} we mean a manifold whose interior admits  
a complete finite volume Riemannian metric locally isometric to hyperbolic
3-space $\mathbb{H}^{3}$.  

It is generally agreed that the volume of a hyperbolic manifold $M$, $\vol[M]$, is a good measure 
of the complexity of $M$.  As evidence of that, arguments of M. Gromov, 
T. J\o rgensen and W. Thurston show that the hyperbolic volume is linearly equivalent
to the number of tetrahedra needed to triangulate a link exterior in $M$.
The argument is based on Thurston's notes \cite{thurston}, for a detailed
presentation see~\cite{gjt}; throughout this paper 
we follow the notation and definitions given in that paper.  
For a precise statement, let $\tc[M]$ be the smallest number of tetrahedra 
needed to triangulate $M \setminus N(L)$, where the minimum is taken over 
all links $L \subset M$ (possibly, $L = \emptyset$) and all possible triangulations.  

\begin{thm}[Gromov, J\o rgensen, Thurston]
\label{thm:jt}
There  exist constants $A,\ B > 0$ so that 
for any hyperbolic manifold $M$ the following holds:
$$A \tc[M] < \vol[M] < B \tc[M].$$
\end{thm}

\begin{rmkk}
\label{rmk:jt}
In the proof of Theorem~\ref{thm:jt} given in~\cite{gjt} it is shown that
given $\mu>0$, a Margulis constant for $\mathbb{H}^{3}$, and $d>0$,
there exists $A>0$, so that $N_{d}(M_{\geq \mu})$ can be triangulated 
using at most $\frac{1}{A} \vol[M]$ tetrahedra;
here $N_{d}(M_{\geq \mu})$ denotes the closed $d$-neighborhood of
the $\mu$-thick part of $M$.
We note that $A$ depends on $\mu$ and $d$.
\end{rmkk}

Theorem~\ref{thm:jt} implies that manifolds of low volume 
admit Heegaard splittings of low genus:
let $M$ be a hyperbolic manifold, $L \subset M$ a link, and $\mathcal{T}$
a triangulation of $M \setminus N(L)$ that realizes $\tc[M]$.
Let $\Gamma$ be $\mathcal{T}^{(1)} \cup \del (M \setminus N(L))$,
where $\mathcal{T}^{(1)}$ denotes the 1-skeleton of $\mathcal{T}$.
It is easy to see that $\del N(\Gamma)$ is a Heegaard surface for $M \setminus N(L)$ and its genus is 
at most the number of tetrahedra plus one, that is, $\tc[M] + 1$.
Since the Heegaard genus does not increase after Dehn filling we get:
$$g(M) \leq \tc[M] + 1 \leq 2\tc[M] < \frac{2}{A} \vol[M].$$  
Here and throughout this paper, $g(M)$ denotes the Heegaard genus of $M$.
The converse is false:
it is easy to construct hyperbolic manifolds of arbitrarily high volume and
Heegaard genus two (for example, consider Dehn fillings of 2-bridge knots;
see \cite{schultens2bridge}).

Our goal is to show that any Heegaard surface for
a generic hyperbolic manifold $M$ is ``simple''.  This is described precisely
in Theorem~\ref{thm:hyperbolic}, and we informally explain it here.
In~\cite{hempel} J. Hempel defined a complexity of Heegaard surfaces
which we will call the {\it distance}, denoted by $d(\s)$
(the distance, which is based on Kobayashi's idea of 
{\it height of loops}~\cite{KobayashiHeights}, is defined in
Section~\ref{sec:prelims}).  
We say that a Heegaard surface $\s$ is {\it simple} if either $g(\s)$ is low (in terms of the volume) or
$d(\s) \leq 2$.   A. Casson and C. Gordon constructed a hyperbolic manifold admitting
infinitely many Heegaard surfaces, and showed that these surfaces all have
distance at least two (in their language, are strongly irreducible).  They further
showed that there is no upper bound on the genera of these surfaces;
hence this result is best possible.

We now explain what a {\it generic} hyperbolic manifold is.
Let $X$ be a compact 3-manifold (not necessarily hyperbolic) so that $\del X$ 
consists of tori, say $T_1,\dots,T_{n}$.
Let $W$ be a manifold obtained from $X$ by Dehn filling some of its 
boundary components, say $T_{1},\dots,T_{m}$, $m\leq n$.  
Note that $X \subset W$ and any
Heegaard surface for $X$ is a Heegaard surface for $W$.
Rieck and E. Sedgwick~\cite{rieck}\cite{rs1}\cite{rs2} 
prove that on each $T_{i}$ there is a finite set of slopes, denoted by $B_{i}$, 
so that if the slope filled on each $T_{i}$ intersects
every slope of $B_{i}$ more than once, then any Heegaard surface for $W$ is a 
Heegaard surface for $X$ (after isotopy if necessary).
In that case, we say that $W$ is a {\it generic} Dehn filling of $X$.
With this in mind, we define:

\begin{dfn}
\label{dfn:generic}
Let $\mu$ be a Margulis constant for $\mathbb{H}^{3}$ and fix $d>0$.  
Let $M$ be a complete hyperbolic manifold of finite volume.
Let $N_{d}(M_{\geq \mu})$ be the closed $d$-neighborhood of the $\mu$-thick
part of $M$; for a discussion see \cite{gjt}, where it was observed that
$M$ is obtained from $N_d(M_{\geq \mu})$ by Dehn filling.
We say that $M$ is $(\mu,d)$-\emph{generic} if $M$ 
is a generic Dehn filling of $N_d(M_{\geq \mu})$.
\end{dfn}

\begin{rmkk}
\label{rmk:generic}
In an effort to justify the term ``generic'' we now sketch an argument that shows that 
for any $\mu$ and $d$, there are indeed many $(\mu,d)$-generic manifold.
Fix $V>0$.  
By Remark~\ref{rmk:jt} 
there are only finitely many topological types for the manifolds $N_d(M_{\geq \mu})$,
where $M$ ranges over all hyperbolic manifolds of volume less than $V$.  Let $X$
be one of these manifolds and denote the components of $\del X$ by $T_{1},\dots,T_{n}$.
Then for each $i$ there is a finite set of slopes of $T_{i}$, say $F_{i}$,
with the following property: as above let $W$ be a 
manifold obtained from $X$ by Dehn filling some of its 
boundary components, say $T_{1},\dots,T_{m}$, $m\leq n$, so that
slope filled is not in $F_{i}$ for all $i$.  Then
$W$ is hyperbolic, the short geodesics in $W$ coincide with the cores of the attached solid
tori, and each short geodesic has a neighborhood of radius greater than $d$.  
Thus $N_d(W_{\geq \mu}) = X$.
We conclude that if $W$ is obtained by 
filling $X$ along slopes that are not in $F_{i}$ and intersect every slope
in $B_{i}$ more than once (where $B_{i}$ was defined in the paragraph preceding
Definition~\ref{dfn:generic}), then $W$ is $(\mu,d)$-generic.
This shows that if $V$ is at least the volume of the figure eight knot exterior
(so that there are infinitely many hyperbolic
manifolds of volume less than $V$), then there are infinitely many $(\mu,d)$-generic
manifolds that have volume less than $V$.
\end{rmkk}

In this paper we prove that $(\mu,d)$-generic manifolds enjoy the following
property:

\begin{thm}
\label{thm:hyperbolic}
Let $\mu>0$ be a Margulis constant for $\mathbb{H}^{3}$ and fix $d>0$.
Then there exists  $\Lambda > 0$ so that for any complete
finite volume $(\mu,d)$-generic hyperbolic manifold $M$ and
any Heegaard surface $\Sigma$ for $M$ the following holds:

If $g(\s) > \Lambda \vol[M]$, then $d(\s) \leq 2$. 
\end{thm}

\begin{rmkk}
\label{rmk:basic}
Fix a hyperbolic manifold $M$.  It is easy to see that if
$d$ is sufficiently large or $\mu$ sufficiently small, then  $M$ is diffeomorphic 
to $N_d(M_{\geq \mu})$, and in particular 
$M$ is $(\mu,d)$-generic.  
Thus, the conclusion of Theorem~\ref{thm:hyperbolic} holds for $M$.
This has two consequences:
\begin{enumerate}
\item  It is well known that the examples of Casson and Gordon mentioned above
have distance two and arbitrarily high genus. Hence
Conclusion~(2) of Theorem~\ref{thm:hyperbolic} cannot be improved.
\item If there exists
$\Lambda$ as in Theorem~\ref{thm:hyperbolic}
that is independant of $\mu$ and $d$, then the assumption
that $M$ is $(\mu,d)$-generic can be removed.  Unfortunately, for $\Lambda$ constructed in this
paper both $\lim_{d \to \infty} \Lambda = \infty$ and $\lim_{\mu \to 0} \Lambda = \infty$
hold.   
\end{enumerate}
\end{rmkk}

Our proof of Theorem~\ref{thm:hyperbolic} uses Dehn filling and hence forces
us to assume that $M$ is $(\mu,d)$-generic.  However, this does not seem to be an integral
part of the theory.  In light of this and Remark~\ref{rmk:basic}~(2) we ask:

\begin{question}
\label{que:hyperbolic}
Is the assumption that $M$ is $(\mu,d)$-generic necessary? 
\end{question}

\bigskip

\noindent The three ingredients necessary for the proof of Theorem~\ref{thm:hyperbolic} 
are Theorem~\ref{thm:jt}, the work of Rieck and Sedgwick, and Theorem~\ref{thm},
which represents the bulk of the work in this paper.  In this theorem we allow a 
flexible definition of triangulation, which we call generalized triangulation. 
See Definition~\ref{dfn:triangulation} and Lemma~\ref{lem:ExistTriangulations}
for 
existence.

\begin{thm} 
\label{thm}
Let $M$ be a compact orientable connected 3-manifold and 
$\s$ a Heegaard surface for $M$.
Suppose that for some 
(possibly empty or disconnected) compact
surface $K \subset \partial M$, $M \setminus K$ admits a generalized
triangulation with $\tm$ generalized tetrahedra.    

If $g(\s) \geq \bdd$, then $d(\s) \leq 2$.
\end{thm} 

\begin{rmkk}
\begin{enumerate}
	\item Theorem~\ref{thm} generalizes S. Schleimer's 
	\cite[Theorem~11.1]{schleimer}, where it was shown that if $M$ is a 
	closed manifold and  $g(\s) \geq 2^{2^{16}\tm^2}$, then $d(\s) \leq 2$.
	\item Theorem~\ref{thm} implies that 
	for every manifold $M$, there is $g_{M} \geq 0$, so that if $\s \subset M$
	is a Heegaard surface of genus at least $g_{M}$, then $d(\s) \leq 2$;
	this also follows from \cite[Theorem~11.1]{schleimer}.
\end{enumerate}	
\end{rmkk}	


\noindent {\bf Outline.}  In Section~\ref{sec:hyperbolic},
we show how Theorem~\ref{thm:hyperbolic} follows from Theorem~\ref{thm}.
In Section~\ref{sec:questions} we explain our
perspective of Theorem~\ref{thm} and list open questions related to 
it.  In Section~\ref{sec:prelims} we explain a few preliminaries.  The work begins in
Section~\ref{sec:X}, where we take a strongly irreducible Heegaard surface of
genus at least $\bdd$, color it, and analyze the coloring; the climax of
Section~\ref{sec:X} is Proposition~\ref{pro:X}, where we prove existence of a
pair of pants with certain useful properties.  Finally, in Section~\ref{sec:proof} we
prove Theorem~\ref{thm}. 

\medskip

\noindent{\bf Acknowledgment.}  We thank Cameron Gordon, Marc Lackenby, 
Kimihiko Motegi, and
Saul Schleimer for interesting conversations and correspondence.  
We thank the anonymous referees for a careful reading of the paper
and insightful comments that added to the content and improved the presentation of this work.
The second named author: this work was carried out while I was visiting OCAMI at Osaka
City University and Nara Women's University. I thank Professor Akio Kawauchi
and OCAMI, and Professor Tsuyoshi Kobayashi and the  math department of Nara
Women's University for the hospitality I enjoyed during those visits.

\section{Proof of Thoerem~\ref{thm:hyperbolic}}
\label{sec:hyperbolic}

We first show how Theorem~\ref{thm:hyperbolic} follows from Theorem~\ref{thm}.
Fix the notation of Theorem~\ref{thm:hyperbolic}.  Let $\lambda = \frac{1}{A}$,
where $A > 0$ is the constant given in Theorem~\ref{thm:jt}.
By Remark~\ref{rmk:jt}, for any complete finite volume  hyperbolic 3-manifold $M$, 
$N_{d}(M_{\geq \mu})$ can
be triangulated using at most $\lambda \vol[M]$ tetrahedra.

Set $\Lambda = 76 \lambda + 29$.   Let $\s \subset M$ be a Heegaard surface 
of genus $g(\s) \geq \Lambda \vol[M]$.  
Using the definition of $\Lambda$ and the fact that $\vol[M] > .9$ (Gabai, Meyerhoff, and Milley \cite{gmm})
we get:
\begin{eqnarray*}
g(\s) &\geq& \Lambda \vol[M] \\
&=& (76 \lambda + 29) \vol[M] \\
&=& 76 \lambda \vol[M]  + 29 \vol[M] \\
&>& 76 \lambda \vol[M]  + 26. 
\end{eqnarray*}

By assumption, $M$ is a $(\mu,d)$-generic, that is, $M$ is obtained from $N_{d}(M_{\geq \mu})$
by a generic Dehn filling (recall 
Definition~\ref{dfn:generic}).  Hence, after isotopy
if necessary, $\s$ is a Heegaard surface for $N_{d}(M_{\geq \mu})$.  By Remark~\ref{rmk:jt},
$N_{d}(M_{\geq \mu})$ can be triangulated using  $t \leq \lambda \vol[M]$ tetrahedra.  
We see that $g(\s) > 76\lambda \vol[M] + 26 \geq 76 t + 26$, 
and by Theorem~\ref{thm} (applied to $\s$ as a Heegaard
 surface of $N_{d}(M_{\geq \mu})$), 
$d(\s) \leq 2$.  It is elementary to see that distance never increases under Dehn 
filling, and we conclude that $\s \subset M$ is a Heegaard surface of distance at most 2, 
completing the proof of Theorem~\ref{thm:hyperbolic}.

\section{Open Questions}
\label{sec:questions}

Theorem~\ref{thm} is a constraint on the distance of surfaces of genus $\bdd$
or more.  There are other constraints on the distance known, and by far 
the most important is  Casson and Gordon's theorem \cite{cg} that
says that no Heegaard surface of an irreducible, non-Haken 3-manifold has
distance exactly one.  Other examples include W. Haken's theorem that says that any
Heegaard surface of a reducible 3-manifold has distance zero, and T. Li's theorem
\cite{li2} that says that a non-Haken 3-manifold admits only
finitely many Heegaard surfaces of positive distance.
Another constraint is~\cite[Corollary~3.5]{schtom}, where
M. Scharlemann and M. Tomova prove that if $\s_1$ and $\s_2$ are
non isotopic Heegaard surfaces of a closed manifold so that 
$d(\s_2) > 2 g(\s_1)$, then $d(\s_1) = 0$ (in fact, they show that $\s_1$ is obtained from
$\s_{2}$ by stabilization).

On the positive side, all but finitely many
of the surfaces constructed by Casson and Gordon have distance exactly two 
(Casson and Gordon's work show that the distance is at least 2 and
Theorem~\ref{thm:hyperbolic} provides a new proof that the distance is at most 2).
Hempel \cite{hempel}, using a construction of Kobayashi~\cite{KobayashiHeights}, 
shows that for any $g \geq 2$
there exists a sequence of 3-manifolds $M_n$ and Heegaard splittings $\s_n$ for
$M_n$, so that $g(\s_n) = g$ and $\lim_{n\to \infty} d(\s_n) = \infty$.
T. Evans \cite{evans} improved this by constructing, given $g \geq 2$ and $d
\geq 0$, a Heegaard splitting of genus $g$ with distance at least $d$.
Recently, Qiu, Zou, and Guo~\cite{QiuZouGuo} and, independently,
Ido, Jang and Kobayashi~\cite{IJK}, constructed, given 
$g \geq 2$ and $d \geq 1$, a compact manifold with Heegaard splitting of 
genus $g$ and distance exactly $d$. 
In~\cite{yoshi} Yoshizawa shows that when $d$ is even, a Heegaard 
splitting of distance exactly $d$ can be obtained by applying high powers of 
a single Dehn twist.

However, the answers to the following questions are not known in general:

\begin{questions}
\label{questions1}  
  \begin{enumerate}
  \item Given $g_i \geq 2$ and $d_i > 0$ ($i=1,2$), does there exist a
  3-manifold admitting distinct Heegaard surfaces $\s_1$, $\s_2$, so that
  $g(\s_i) = g_i$ and $d(\s_i) = d_i$?
  \item Given $d_i > 0$ ($i=1,2$), does there exist a 3-manifold admitting
  distinct Heegaard surfaces $\s_1$, $\s_2$, so that $d(\s_i) = d_i$?
  \end{enumerate}
\end{questions}

Questions~(1) and~(2) above can naturally be generalized to more that
two surfaces by setting $i=1,\dots,n$, for some chosen $n$.
The word ``distinct'' in the questions above can be
interpreted as ``distinct up to isotopy'' or ``distinct up to homeomorphism'';
both yield interesting questions.  

The answer for Question~\ref{questions1}~(2) is known only in the following
cases:

\begin{itemize}
\item $d_1 = d_2 = 2$: As mentioned above, there are examples of Casson and
    Gordon of 3-manifolds admitting infinitely many Heegaard surfaces of
    unbounded genera and of distance invariant two.  Other examples follow from
    S. Beiler and Y. Moriah~\cite{BleilerMoriah} (see also
    K. Morimoto and M. Sakuma~\cite{MorimotoSakuma}).  They show that there
    exist 2-bridge knots $K$ admitting more than one minimal genus Heegaard
    surface (up to homeomorphism).  Let $\s$ be one of these surfaces.  It
    is easy to see that $d(\s)=2$: first, since $g(\s)=2$, it is easy to see that
    $d(\s) \geq 2$.  Next, $\s$ is constructed by viewing $K$ as a torus
    1-bridge knot (that is, there exists a genus 1 Heegaard splitting 
    $T_1 \cup T_{2}$ so that $K$ intersects each $T_i$ in a single unknotted arc)
    and tubing once.  Meridian disks for $T_i$ which are disjoint from $K$
    and the tube, are also disjoint from the core of the tube,  showing that
    $d(\s) \leq 2$. 
\item $d_1 = d_2 = 1$:  
    Let $S$ be a 4-punctured sphere and $M = S \times S^1$.
    J. Schultens \cite{schultens} showed that $g(M) = 3$.  
    We note that $M$ admits two
    minimal genus Heegaard splittings, say $\s_1$ and $\s_2$, such that $\s_1$
    is obtained by tubing three boundary parallel tori, and $\s_2$ is
    obtained by tubing two boundary parallel tori, with an
    extra tube that wraps around a third boundary component.  Since $\s_1$ and
    $\s_2$ induce boundary partitions with distinct numbers of components, they
    are distinct up to homeomorphism.  By construction, $d(\s_1) = d(\s_2) = 1$.
\item $d_1 = 1, \ d_2 = 2$: In \cite{KRSIWR} the authors constructed a 3-manifold
    $M$ admitting minimal genus Heegaard splittings $\s_1$, $\s_2$, with $d(\s_2) = 2$ and
    $d(\s_1)  = 1$.  In this example, $g(M) = g(\s_1) = g(\s_2) = 3$.
\item $d_{1} =  d_{2} = 3$: Scharlemann~\cite{MR2823137}, based on a preprint by
Berge~\cite{berge}, shows that there exists a closed manifold $M$ admitting two 
Heegaard splittings $\s_{1}$ and $\s_{2}$, distinct up-to homeomorphism,
so that $g(M) = g(\s_{1}) = g(\s_{2})=2$ and $d(\s_{1}) = d(\s_{2})=3$.
\end{itemize}

We see that much is known when $d_1$, $d_2 \leq 3$.  By contrast, the answers 
to the following basic questions are unknown:

\begin{questions}
\label{questions2}  
  \begin{enumerate}
  \item Does there exist a 3-manifold admitting two (or more) distinct Heegaard surfaces
  with distance four or more?
  \item Does there exist a 3-manifold admitting a Heegaard surface of distance 
  three or more that is not of minimal genus?
  \end{enumerate}
\end{questions}

\section{Preliminaries}
\label{sec:prelims}

By {\it manifold} we mean compact, connected, orientable 3-manifold.  We assume familiarity with
the basic notions of 3-manifold topology (see, for example, \cite{hempel-book}
or \cite{jaco}) and the basic facts about Heegaard splittings (see, for
example, \cite{scharlemann} or \cite{sss}).   
We use the notation $N(\ )$ for open normal neighborhood,
$\del$ for boundary, and $| \ \ |$
for the number of components.  We define:

\begin{dfns}
\label{dfn:triangulation}
\begin{enumerate}
\item Let $T$ be a tetrahedron.  A {\it generalized tetrahedron} is obtained
  by fixing two disjoint sets of vertices of $T$, denoted $V_1$, $V_2$, and then 
  removing $V_1$ and  truncating $V_2$; that is, a generalized tetrahedron $T'$ has the form 
$T' = T \setminus (V_1 \cup N(V_2))$.   
  $T'$ has exactly four faces (resp. exactly
  six edges, at most four vertices), which are the intersection of the faces
  (resp. edges, vertices) of $T$ with $T'$.  In particular, the components of
  $\del N(V_2) \cap T'$ are not considered faces.  Important special cases are when
  $V_2 = \emptyset$, then $T'$ is called {\it semi-ideal}, and when $V_1$
  consists of all four vertices, then $T'$ is called {\it ideal}.
\item A {\it generalized triangulation} is obtained by gluing together
  finitely many generalized  tetrahedra, where the gluings are done by
  identifying faces, edges, and vertices.   Self-gluings (that is,
  gluing a tetrahedron to itself) are allowed, as are multiple gluings (that is, gluing
  two tetrahedra along more than one face).   We refer the reader to
  \cite{hatcher} for a detailed description in the special case when only
  tetrahedra are used, known there as $\Delta$ complexes.  If all the
  generalized tetrahedra are ideal (resp. semi ideal), then the generalized
  triangulation is called an {\it ideal} (resp. {\it semi  ideal})
  triangulation.  If the quotient obtained is homeomorphic to a given manifold
  $M$ it is said to be a {\it generalized triangulation of $M$.}
\end{enumerate}
\end{dfns}

We refer the reader to, for example, \cite[Section~2]{LackenbyAlgorithm} for a
detailed discussion of generalized tetrahedra.  It is well known that a very
large class of 3-manifolds admits generalized triangulations, including
all compact 3-manifolds.  We outline the proof here.   
Let $W$ be a compact manifold and $K_i \subset \del W$ ($i=1,\dots,n$) a disjoint, closed, 
connected subsurfaces.  By crushing each $K_i$ to a point $p_i$, we obtain a
3-complex $X$.  We can triangulate $X$ so that each $p_i$ is a vertex of the
triangulation.  Removing $p_i$ we obtain a semi-ideal triangulation of $N
\setminus (\cup_i K_i)$.  We conclude that 
(with $K$ corresponding to $\cup_{i} K_{i}$):

\begin{lem}
\label{lem:ExistTriangulations}  
Let $M$ be a compact manifold and $K \subset \del M$ a (not necessarily connected) 
closed subsurface.  Then  $M \setminus K$ admits a generalized triangulation.  
\end{lem}

\bigskip\noindent
In \cite{hempel} Hempel defined the {\it distance} of a Heegaard splitting:

\begin{dfn}
\label{dfn:distance} 
Let $V_1 \cup_\s V_2$ be a Heegaard splitting and $d \geq 0$ an integer.  We
say that the distance of $\s$ is $d$, denoted by $d(\s) = d$, if $d$ is the
smallest integer so that there exist meridian disks $D_1 \subset V_1$ and $D_2
\subset V_2$, and 
essential curves $\alpha_i \subset \s$ ($i=0,\dots,d$), so that $\alpha_0 =
\del D_1$, $\alpha_d = \del D_2$, and $\alpha_{i-1} \cap \alpha_i
= \emptyset$ ( for $1 \leq i \leq d$).
\end{dfn}
%

The following lemma is easy and well known (see, for example
\cite[Remark~2.6]{schleimer}): 

\begin{lem}
\label{lem:distance}
Let $V_1 \cup_\s V_2$ be a Heegaard splitting.  Suppose that one of the
following holds:
\begin{enumerate}
\item for $i=1,2$, there exists a properly embedded, non-boundary parallel
  annulus $A_i \subset V_i$, and there exists an essential curve $\alpha
  \subset \s$ so that $\alpha \subset A_1 \cap A_2$ (that is to say, $A_{1}$ and 
  $A_{2}$ have an  essential common boundary component), or:
\item there exist a meridian disk $D_1 \subset V_1$ and a properly embedded
  non-boundary parallel annulus $A_2 \subset V_2$, 
  so that $D_{1}$ is disjoint from at least one component of $\del A_{2}$ 
  that is essential in $\s$.
\end{enumerate}
Then $d(\s) \leq 2$.
\end{lem}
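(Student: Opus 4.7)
The plan is to produce, in both cases, a length-$2$ path $\partial D_1, \alpha, \partial D_2$ in the curve complex of $\s$, where $D_i\subset V_i$ is a meridian disk for $i=1,2$. Since $\alpha$ is essential in $\s$ by hypothesis, if we can arrange $\partial D_i\cap\alpha=\emptyset$ for both $i$, this triple of pairwise consecutively disjoint essential curves will witness $d(\s)\le 2$ directly from Definition~\ref{dfn:distance}.

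In case~(2) the disk $D_1$ is already given with $\partial D_1\cap\alpha=\emptyset$, so the task reduces to producing $D_2$. In case~(1) both $D_1$ and $D_2$ must be produced. In either situation, the core step is the following standard assertion: given a properly embedded, non-boundary-parallel annulus $A$ in a compression body $V$, with a component $\alpha$ of $\partial A$ essential in $\partial_+V$, there exists a meridian disk $D\subset V$ with $\partial D\cap\alpha=\emptyset$. Applying this assertion to $A_1\subset V_1$ and $A_2\subset V_2$ in case~(1), or to $A_2\subset V_2$ in case~(2), yields the required disks.

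To prove the assertion, I would split into cases. If $A$ is compressible in $V$, a compressing disk has essential boundary in $A$, hence boundary isotopic to the core of $A$; compressing $A$ along this disk yields two disks with boundaries equal to the two components $\alpha,\beta$ of $\partial A$. The disk bounded by $\beta$ (or, in the degenerate subcase where $\beta$ is inessential in $\partial_+V$, the disk bounded by $\alpha$, which gives $\partial D=\alpha$ and hence an even shorter curve-complex path) is the desired meridian disk. If $A$ is incompressible, then since $V$ is a compression body and $A$ is not a disk, $A$ is boundary-compressible; putting $A$ in general position with a complete meridian disk system $\Delta$ of $V$ and minimizing $|A\cap\Delta|$, an outermost-arc analysis of $A\cap\Delta$ produces a boundary-compressing disk for $A$ which, after a case analysis on whether the arc is spanning or not and on which component of $\partial A$ its endpoints lie, can be used to manufacture a meridian disk of $V$ whose boundary avoids $\alpha$.

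The principal obstacle is this last step of the assertion: ensuring that the outermost arc (or the resulting boundary compression) can be chosen so that the produced meridian disk's boundary lies away from $\alpha$ specifically, rather than merely off $\partial A$ generically. The non-boundary-parallel hypothesis on $A$ is essential here, since it rules out the trivial case in which every reduction pushes $A$ harmlessly into $\partial V$ without yielding a new meridian disk.
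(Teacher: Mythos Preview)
The paper does not actually prove this lemma: it is stated as ``easy and well known'' with a citation to \cite[Remark~2.6]{schleimer}, and no argument is given. So there is no paper proof to compare against; your proposal supplies what the paper omits.

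Your strategy is the standard one and is correct. The ``principal obstacle'' you flag in the final paragraph is not in fact an obstacle, and you should remove the hedging. When $A$ is incompressible in the compression body $V$, the outermost-arc argument against a minimal meridian system $\Delta$ yields a $\partial$-compressing bigon $E$ with $\partial E = a \cup b$, $a \subset A$, $b \subset \partial_+ V$. If both components of $\partial A$ lie on $\partial_+ V$, then either $a$ is inessential in $A$ (reducing $|A\cap\Delta|$, contradiction) or $a$ is a spanning arc; $\partial$-compressing $A$ along $E$ then produces a disk whose boundary lies in a regular neighborhood of $\alpha\cup b\cup\beta$ and is automatically disjoint from both $\alpha$ and $\beta$ after a small push-off. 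If this disk had inessential boundary, reassembling would show $A$ is $\partial$-parallel, contrary to hypothesis. If instead $A$ has one boundary component on $\partial_- V$, then every arc of $A\cap\Delta$ has both endpoints on $\alpha$ (since $\partial\Delta\subset\partial_+V$), hence is inessential in $A$, forcing $A\cap\Delta=\emptyset$ at the minimum; then any disk of $\Delta$ already has boundary disjoint from $\alpha$. In short, no delicate choice of arc is needed: disjointness from $\alpha$ comes for free in every case.
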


\section{Coloring $\s$ and constructing the pair of pants $X$}
\label{sec:X}

Fix $M$ as in the statement of Theorem~\ref{thm} and let $V_1 \cup_\s V_2$ be
a Heegaard splitting for $M$ with $g(\s) \geq \bdd$.
Let $\T$ be a generalized triangulation of 
$M \setminus K$ (where $K \subset \del M$ is a closed subsurface)
with $\tm$ generalized tetrahedra.   

If $\s$ weakly  reduces, then $d(\s) \leq 1$; we assume as we may
that $\s$ is strongly irreducible. Rubinstein \cite{rubinstein} (see also 
Stocking \cite{stocking} and Lackenby~\cite{lackenby}\cite{LackenbyAlgorithm} when 
$M$ is not closed) show that $\s$ is isotopic to an {\it almost 
normal} surface, that is, after isotopy the intersection of $\s$ with the
generalized tetrahedra of $\T$ consists of {\it normal  faces}, of which there are two
types: 
\begin{enumerate}
\item normal disks (normal triangles and normal quadrilaterals)
\item an exceptional component, which is either an octagonal disk or an 
annulus obtained by tubing together two normal disks; at most one normal 
face of $\s$ is an exceptional component.
\end{enumerate}

Let $N$ be a regular neighborhood of $\mathcal{T}^{(1)}$, the 1-skeleton of
$\mathcal{T}$.  For each $v \in \mathcal{T}^{(1)}  \cap \s$, let $D_v$
be the component of $\s \cap N$ containing $v$.  Then $D_v$ is a disk properly
embedded in $N$, called the {\it vertex disk corresponding to} $v$.  Let
$\widehat{F}$ be a normal face contained in a generalized tetrahedron $T$.  Then $F =
\widehat{F} \setminus \mbox{int}N$ is obtained from $\widehat{F}$ by removing
a neighborhood of the vertices of $\widehat{F}$.  $F$ is called a {\it
truncated normal face}.  For the remainder of this paper, by a {\it face} we
mean a truncated normal face or a vertex disk.  

\begin{rmkk}
\label{rmk:3valent}
The union of the boundaries of the faces forms a 3-valent graph in $\s$.      
\end{rmkk}

Let $v,\ v' \in \mathcal{T}^{(1)} \cap \s$ 
be two vertices and $D_v$, $D_{v'}$
the corresponding vertex disks.  Then $D_v$ and $D_{v'}$ are called {\it
$I$-adjacent}  if $v$ and $v'$ are contained in the same edge $e \in
\mathcal{T}^{(1)}$ and $v$ is adjacent to $v'$ along $e$.
Note that $D_v$ is $I$-adjacent to $D_{v'}$
if and only if $v$ and $v'$ are contained in the same edge $e \in
\mathcal{T}^{(1)}$ and  there exists an $I$-bundle over $D^{2}$
with total space $Q \subset N$, so that
$\del Q \setminus (D_v \cup D_{v'}) \subset \del N$, $Q \cap \s = D_v \cup
D_{v'}$, and $D_v \cup D_{v'}$ is the associated $\del I$-bundle.  

Let $F$ and $F'$ be truncated normal faces.  Then $F$ and $F'$ are called {\it
$I$-adjacent}  if the corresponding normal
faces are parallel and there is no normal face between the two.  Note
that $F$ and $F'$ are $I$-adjacent if and only if they
are contained in the same generalized tetrahedron $T$,
and  there exists an $I$-bundle with total space
$Q \subset T \setminus \mbox{int}N$, so that  
$\del Q \setminus (F \cup F') \subset \del (T \setminus \mbox{int}N)$
and is disjoint from the vertices, truncated vertices, and missing vertices, 
 $Q \cap \s = F \cup F'$, and $F \cup F'$ is the associated $\del I$-bundle.  

Clearly $I$-adjacency is 
symmetric but 
not, in general, transitive.  The equivalence relation generated by 
$I$-adjacency is called {\it $I$-equivalence}, and its
equivalence classes are  called {\it $I$-equivalent families.}
For example, suppose that a tetrahedron contains four quadrilaterals
and denote the corresponding truncated normal faces by
$q_{1},q_{2},q_{3},q_{4}$ (listed in order).  If there is a truncated exceptional
piece between $q_{2}$ and $q_{3}$, then the truncated quadrilaterals 
form exactly two $I$-equivalent
families: $\{q_{1},q_{2}\}$ and $\{q_{3},q_{4}\}$.

Let $\mathcal{F}$ be an $I$-equivalent family.  Then $I$-adjacency 
induces a linear ordering on the faces in $\mathcal{F}$, ordered as
$F_1,\dots,F_n$, so that  $F_i$ is $I$-adjacent to $F_{i+1}$
($i=1,\dots,n-1$).  This order is unique up-to  reversing.  We color the faces
of $\mathcal{F}$ as follows:
\begin{enumerate}
\item $F_1$, $F_{2}$, $F_{n-1}$, and $F_n$ are colored red.
\item If $n \geq 5$, then $F_3,\dots,F_{n-2}$ are colored blue and
  yellow alternately.  Note that this leaves us the freedom to exchange the
  blue and yellow colors of the faces of $\mathcal{F}$.
\end{enumerate}

\begin{rmk}
For most of our work it suffices to color red $F_{1}$ and $F_{n}$.  
We need to color $F_{2}$ and $F_{n-1}$ red as well for the last case of the proof of 
Theorem~\ref{thm}, where a further refinement of the colors
will be given.
\end{rmk}

By construction, any yellow or blue face is  $I$-adjacent to two distinct faces.

\begin{rmkk}
\label{rmk:RedDisk}
Let $D_v$ be a red vertex disk.  By construction, $D_v$ is outermost or next
to outermost along an edge of $\mathcal{T}^{(1)}$.  Therefore all the truncated
normal faces that intersect $D_v$ are red as well.      
\end{rmkk}

\begin{lem}
\label{lem:NumberOfRed}
Let $\rt$ denote the number of the red truncated triangles and $\rqq$ the
number of the red truncated quadrilaterals.  Then one of the following holds:
\begin{enumerate}
\item $\rt \leq 16\tm$ and $\rqq \leq 4\tm + 4$.
\item $\rt \leq 16\tm + 4$ and $\rqq \leq 4\tm$.
\end{enumerate}
\end{lem}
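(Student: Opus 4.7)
The plan is to count red truncated faces one generalized tetrahedron at a time, relying on the classical normal-surface-theoretic observation that parallel normal disks in a tetrahedron organize into a small number of stacks, and then to correct for the single exceptional normal face.

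Inside a generalized tetrahedron $T$, any two normal triangles that are not parallel must intersect, so the truncated triangles of $\s$ in $T$ organize into at most four parallel stacks, one near each vertex of $T$. Similarly, any two normal quadrilaterals of distinct types in a tetrahedron must intersect, so the truncated quadrilaterals of $\s$ in $T$ organize into at most one parallel stack. By the definition of $I$-equivalence and its transitive closure, each such maximal parallel stack is precisely one $I$-equivalent family. The coloring rule gives every $I$-equivalent family at most two dark red faces ($F_1$ and $F_n$) and at most two light red faces ($F_2$ and $F_{n-1}$), so each family contributes at most four red faces. Summing over the $\tm$ generalized tetrahedra gives at most $16\tm$ red truncated triangles and at most $4\tm$ red truncated quadrilaterals.

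Next I would correct for the unique exceptional normal face. If it is an octagonal disk, the tetrahedron containing it has no normal quadrilateral, so the bounds just derived are only lowered and either case of the lemma is satisfied. If the exceptional face is an annulus, it is formed by tubing two parallel normal disks of a single type (both triangles, or both quadrilaterals) across an edge of the triangulation. The two tubed disks together with the tube effectively introduce one extra $I$-equivalent family of the corresponding type, which contributes at most four additional red faces of that type. When the tubed disks are triangles this yields case~(2) of the lemma, and when they are quadrilaterals it yields case~(1).

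The main obstacle is the careful accounting around the exceptional face, in particular the annulus case. Because the annulus spans two adjacent generalized tetrahedra and alters the parallelism relation along the shared edge, one must verify that the tubing contributes at most four extra red faces, all of the same type, and that no other family in the affected tetrahedra gains additional red faces beyond the basic coloring bound of four.
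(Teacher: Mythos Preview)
Your overall strategy is exactly the paper's: in every generalized tetrahedron the truncated triangles and quadrilaterals fall into at most four and one $I$-equivalent families respectively, and each family carries at most four red members. The paper then simply asserts that the tetrahedron containing the exceptional piece has at most five triangle families and one quadrilateral family, or at most four triangle families and two quadrilateral families, and reads off the two cases of the lemma from that.

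Where your proposal goes wrong is in the description of the almost normal annulus. First, the annulus lies in a \emph{single} generalized tetrahedron, with the tube running parallel to one of that tetrahedron's edges; it does not span two adjacent tetrahedra, so your final paragraph is worrying about a phenomenon that does not occur. Second, the two tubed normal disks need not be parallel and need not be of the same combinatorial type: one may tube a triangle to a quadrilateral, or two non-parallel triangles, along a common edge. So the dichotomy ``both triangles, or both quadrilaterals'' is not exhaustive, and your mechanism ``introduce one extra $I$-equivalent family of the corresponding type'' does not literally apply to the mixed cases. The correct accounting is that the exceptional piece, whatever its shape, can sit inside at most one parallelism stack and split it into two (all other stacks in that tetrahedron being either unaffected or simply truncated by the tube), which is precisely the paper's assertion of at most $(5,1)$ or $(4,2)$ families in the exceptional tetrahedron. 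Once you correct the picture of the annulus to a single-tetrahedron object and drop the unjustified ``parallel, same type'' hypothesis, your argument and the paper's coincide.
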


\begin{proof}
A generalized tetrahedron not containing the exceptional component admits at most four
$I$-equivalent families of truncated triangles and one $I$-equivalent  family
of truncated quadrilaterals.   If there is an exceptional component, the
generalized tetrahedron containing it admits at most five $I$-equivalent families of
truncated triangles and  one $I$-equivalent family of truncated
quadrilaterals, or at most four $I$-equivalent families of truncated triangles
and two $I$-equivalent families of truncated quadrilaterals.  Each family
contains at most four red faces.  The lemma follows.
\end{proof}

Let $B$ (resp. $Y$, $R$) denote the union of the blue (resp. yellow, red)
faces; note that faces are closed, 
so $R$, $Y$, and $B$ are compact and may intersect along their boundaries.  
By Remark~\ref{rmk:3valent}, $B$, $Y$, $R$, and $B \cup Y$ are
subsurfaces of $\s$.

\begin{lem}
\label{lem:chi}
$\chi(B \cup Y) \leq -(108 \tm + 38)$.
\end{lem}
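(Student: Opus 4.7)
The strategy is a direct Euler-characteristic computation. Since $\s$ is a closed surface covered by the complementary compact subsurfaces $R$ and $B\cup Y$ whose common boundary $\partial R=\partial(B\cup Y)$ is a $1$-manifold (hence has Euler characteristic zero), additivity gives
\[
\chi(B\cup Y)=\chi(\s)-\chi(R).
\]
The hypothesis $g(\s)\ge\bdd$ yields $\chi(\s)\le 2-2(76\tm+26)=-(152\tm+50)$, so it suffices to bound $\chi(R)$ from below by something like $-(32\tm+12)$, which will give $\chi(B\cup Y)\le -(120\tm+38)\le -(108\tm+38)$.

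To control $\chi(R)$, I first observe that every vertex of the $3$-valent graph $G$ of Remark~\ref{rmk:3valent} is incident to exactly one vertex disk and two truncated normal faces: the vertex disks have pairwise disjoint boundary circles in $\partial N$, while on the boundary of any truncated normal face, normal arcs (on faces of $\mathcal{T}$) and truncation arcs (on $\partial N$) alternate, so at each corner one of each meets, and the third face at the corresponding vertex of $G$ is the unique vertex disk lying across the two truncation arcs. Classify a vertex $v$ of $G$ by the number $a_v\in\{0,1,2,3\}$ of incident red faces, set $V_a=|\{v:a_v=a\}|$, and split $E_{RR}$ (edges between two red faces) into $E_{RR}^{tt}$ (lying on a normal arc between two red truncated normal faces) and $E_{RR}^{dt}$ (lying on a truncation arc between a red vertex disk and a red truncated normal face). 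By Remark~\ref{rmk:RedDisk} a red vertex disk forces both adjacent truncated normals to be red; consequently both endpoints of any $E_{RR}^{dt}$-edge lie in $V_3$, and each vertex of $V_3$ is the endpoint of exactly two $E_{RR}^{dt}$-edges. A double count therefore yields the identity $V_3=E_{RR}^{dt}$.

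Computing $\chi(R)$ from the CW structure that $G$ induces (start from the disjoint union of the red polygonal disks, with total Euler characteristic $F_R$, and perform the identifications prescribed by $G$) gives $\chi(R)=F_R-E_{RR}+V_3$. Substituting $V_3=E_{RR}^{dt}$ and $E_{RR}=E_{RR}^{tt}+E_{RR}^{dt}$ simplifies this to the clean formula
\[
\chi(R)=F_R-E_{RR}^{tt}.
\]
(If the exceptional component is an annulus rather than an octagonal disk and happens to be red, the Euler characteristic of that face drops by one; the formula is shifted by at most one unit, which does not affect the asymptotics.)

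Finally, bound $E_{RR}^{tt}$ using Lemma~\ref{lem:NumberOfRed}. A red truncated triangle contributes $3$ normal arcs to its boundary, a red truncated quadrilateral contributes $4$, and a red octagonal exceptional component contributes at most $8$; each $E_{RR}^{tt}$-edge is counted twice in the sum over red truncated normal faces, so $2E_{RR}^{tt}\le 3\rt+4\rqq+8$. In both cases of Lemma~\ref{lem:NumberOfRed} one checks $3\rt+4\rqq\le 64\tm+16$, hence $E_{RR}^{tt}\le 32\tm+12$. Since $F_R\ge 0$, this gives $\chi(R)\ge -(32\tm+12)$, and combining with the first paragraph completes the proof. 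The main technical hurdle is deriving the identity $\chi(R)=F_R-E_{RR}^{tt}$ via the vertex-and-edge bookkeeping above; once it is established, the rest is routine linear arithmetic on the bounds supplied by Lemma~\ref{lem:NumberOfRed}.
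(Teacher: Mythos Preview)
Your argument is correct and reaches the stated bound, but it takes a different route from the paper to control $\chi(R)$. The paper simply assembles $R$ one face at a time and tracks the worst-case drop in Euler characteristic: starting from the (possibly empty) exceptional piece, each red truncated triangle can lower $\chi$ by at most $2$, each red truncated quadrilateral by at most $3$, and red vertex disks only help. Plugging in Lemma~\ref{lem:NumberOfRed} gives $\chi(R)\ge -(44\tm+12)$, and then $\chi(B\cup Y)=\chi(\s)-\chi(R)\le -(108\tm+38)$ follows exactly, with no slack.

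Your approach instead extracts the clean identity $\chi(R)=F_R-E_{RR}^{tt}$ by exploiting Remark~\ref{rmk:RedDisk} to match $V_3$ with $E_{RR}^{dt}$, and then bounds $E_{RR}^{tt}$ by half the total number of normal arcs on red truncated faces. This is more combinatorially refined and yields the sharper $\chi(R)\ge -(32\tm+12)$, hence $\chi(B\cup Y)\le -(120\tm+38)$, which of course implies the lemma. The trade-off is that your route requires verifying the vertex structure of the $3$-valent graph (one vertex disk and two truncated normals at each vertex) and the $V_3=E_{RR}^{dt}$ double count, whereas the paper's face-by-face estimate is cruder but essentially immediate. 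Your handling of the exceptional annulus is fine: since the exceptional piece is always in its own $I$-equivalence class and hence dark red, one has $F_R\ge 1$ in that case, so $F_R-1\ge 0$ and the bound $\chi(R)\ge -E_{RR}^{tt}$ survives unchanged.
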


\begin{proof}
We first show that $\chi(R) \geq -(44\tm+12)$; for that, 
we order the red faces as $F_{0}$, $F_{1},\dots,F_{k}$, $F_{k+1},\dots,F_{n}$ (for some $k$, $n$)
so that $F_{0}$ is the exceptional piece (if there is one, $F_{0} = \emptyset$ otherwise),
 $F_{1},\dots,F_{k}$ are the red truncated normal faces, and $F_{k+1},\dots,F_{n}$
are red vertex disks.  Note that $\chi(F_{0}) = 0$ or $\chi(F_{0}) = 1$, so the
worst case scenario is 0.   By Remark~\ref{rmk:3valent}, 
for $0 \leq i \leq k$, the possibilities
for  $F_{i} \cap (\cup_{j=1}^{i-1} F_{j})$ are: $\emptyset$, $S^{1}$,
or a number of segments, each homeomorphic to $I$.   Since a truncated  
normal triangle (respectively quadrilateral)
is a hexagon (respectively octagon), the number of segments is at most 3 (respectively 4).
We see that 
$$\chi(\cup_{j=1}^{i} F_{j}) \geq \chi(\cup_{j=1}^{i-1} F_{j}) -2$$
when $F_{i}$ is a truncated normal triangle and 
$$\chi(\cup_{j=1}^{i} F_{j}) \geq \chi(\cup_{j=1}^{i-1} F_{j}) -3$$
when $F_{i}$ is a truncated normal quadrilateral. 
By Remark~\ref{rmk:RedDisk}, for $i \geq k+1$, $F_{i}$
caps a hole of $\cup_{j=1}^{i-1} F_{i}$; hence 
$$\chi(\cup_{j=1}^{i} F_{j}) = \chi(\cup_{j=1}^{i-1} F_{j}) +1$$
in that case.  Recall
that $\rt$ and $\rqq$ were 
defined and bounded in Lemma~\ref{lem:NumberOfRed}.  Adding the contributions
of the exceptional component (at worst $0$), the triangles (at worst $-2\rt$),
the quadrilaterals (at worst $-3\rqq$), and ignoring the positive contribution
of the vertex disks, Lemma~\ref{lem:NumberOfRed} gives:
\begin{eqnarray*}
  \chi(R)  &\geq& 0 - 2\rt -3 \rqq \\       %
          &\geq& 0 - 2 (16\tm) -3 (4\tm+4) \\   %
          &=& -(44\tm+12).        %
\end{eqnarray*}

%
%
Since $R$ and $B \cup Y$ are subsurfaces, $\s = R \cup (B \cup Y)$,
and $R \cap (B \cup Y) = \del R = \del (B \cup Y)$ consists of circles,
we have that $\chi(B \cup Y) = \chi(\s) - \chi(R)$.  By assumption $g(\s) \geq \bdd$, or
equivalently $\chi(\s) \leq 2-2(\bdd)$.  Hence:
\begin{eqnarray*}
  \chi(B \cup Y) &=& \chi(\s) - \chi(R) \\ &\leq& [2-2(\bdd)] + [44\tm+12] \\
         &=& -(108 \tm + 38).
\end{eqnarray*}
\end{proof}

\begin{lem}
\label{lem:del}
$|\partial (B \cup Y)| \leq 44\tm + 14$.
\end{lem}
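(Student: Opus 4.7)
My plan is to imitate the face-by-face construction in the proof of Lemma~\ref{lem:chi}, but this time tracking the number of boundary components $b = |\partial R|$, which equals $|\partial(B \cup Y)|$ since $R$ and $B \cup Y$ share their boundary. I build $R$ up starting with the truncated exceptional component (if any) or the empty set, then adding, one at a time, the red truncated triangles, then the red truncated quadrilaterals, and finally the red vertex disks. Initially $b \leq 2$, the worst case being an annular exceptional component.

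The core claim is that each red truncated triangle contributes $\Delta b \leq 2$, each red truncated quadrilateral contributes $\Delta b \leq 3$, and each red vertex disk contributes $\Delta b \leq 0$. For a red truncated triangle $T$, the hexagonal boundary $\partial T$ has three ``long'' edges (each shared with an adjacent truncated normal face) alternating with three ``short'' edges (each shared with a vertex disk). Since vertex disks come last in our ordering, no short edge is shared with the current $R$; hence $T \cap R$ consists of exactly the $k \leq 3$ long edges of $\partial T$ that are shared with red truncated normal faces already in $R$, and these form $k$ pairwise disjoint arcs since consecutive long edges of $\partial T$ are separated by an unshared short edge. Thus $\chi(T \cap R) = k$ and $\Delta \chi(R) = 1 - k$; combining with $b = 2|R| - 2g(R) - \chi(R)$ yields $\Delta b = (k - 1) + 2\Delta|R| - 2\Delta g(R)$. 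For $k \geq 1$ we have $\Delta|R| \leq 0$ and $\Delta g(R) \geq 0$, giving $\Delta b \leq k - 1 \leq 2$; for $k = 0$, $T$ is a new disk component and $\Delta b = 1$. The analogous argument for a red truncated quadrilateral $Q$---whose octagonal boundary has four long and four short edges alternating, so $k \leq 4$---yields $\Delta b \leq 3$. For a red vertex disk $D_v$, Remark~\ref{rmk:RedDisk} guarantees that every truncated normal face meeting $D_v$ is red, hence already in $R$; therefore $\partial D_v \subset \partial R$, and being a single connected circle, $\partial D_v$ is an entire boundary component of $R$. Attaching $D_v$ caps this circle, giving $\Delta b = -1$.

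Summing these contributions and applying Lemma~\ref{lem:NumberOfRed}: option~(1) gives $b(R) \leq 2 + 2(16\tm) + 3(4\tm + 4) = 44\tm + 14$, while option~(2) gives $2 + 2(16\tm + 4) + 3(4\tm) = 44\tm + 10$. Either way $|\partial(B \cup Y)| = b(R) \leq 44\tm + 14$. The delicate point I expect to need the most care is the structural claim that $T \cap R$ (and $Q \cap R$) is a disjoint union of arcs rather than a more complex 1-complex; this rests on the alternation of long and short edges around the face's boundary combined with the specified order of face addition, which together prevent any two shared edges from meeting at a vertex.
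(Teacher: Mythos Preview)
Your proof is correct and follows essentially the same approach as the paper: build $R$ face by face, track $|\partial R| = |\partial(B\cup Y)|$, and bound the per-face increment by $2$ for triangles, $3$ for quadrilaterals, and a non-positive amount for vertex disks. The paper simply asserts these per-face bounds (e.g., ``worst case scenario for attaching a truncated triangle is attaching it along three edges, in which case the number of boundary components is increased by at most $2$''), whereas you supply a careful justification via the identity $b = 2|R| - 2g(R) - \chi(R)$ together with the long/short edge alternation to ensure the attached arcs are disjoint; this extra detail is sound and arguably an improvement in rigor, but the underlying argument is the same.
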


\begin{proof}
By construction $\partial (B \cup Y) = \partial R$.  
Bounding $|\del R|$ is similar to the proof of the previous lemma and we only paraphrase
it here: we order
the red faces as $F_{0},\dots,F_{n}$ as in the proof of the previous lemma.
It is easy to see that $|\del F_{0}|$ is at most 2, and (similar to the Euler
characteristic count on the previous lemma) for $1 \leq i \leq k$, 
$$|\del (\cup_{j=1}^{i} F_{j})| \leq |\del (\cup_{j=1}^{i-1} F_{j})| +2$$
when $F_{i}$ is a truncated normal triangle, and
$$|\del (\cup_{j=1}^{i} F_{j})| \leq |\del (\cup_{j=1}^{i-1} F_{j})| +3$$
when $F_{i}$ is a truncated normal quadrilateral.
By Remark~\ref{rmk:RedDisk}, for $i \geq k+1$,
$$|\del (\cup_{j=1}^{i} F_{j})| \leq |\del (\cup_{j=1}^{i-1} F_{j})| -1.$$
Adding up the contributions of the truncated normal faces
and ignoring the negative contribution of the vertex disks, 
Lemma~\ref{lem:NumberOfRed} gives:
\begin{eqnarray*}
  |\partial (B \cup Y)| &=& |\partial R| \\  %
               &\leq& 2 + 2\rt + 3 \rqq \\  %
               &\leq& 2 + 2(16\tm) + 3(4\tm+4) \\ %
               &=& 44\tm + 14.  %
\end{eqnarray*}
\end{proof}

By Remark~\ref{rmk:3valent}, $B \cap Y$ is a compact 1-manifold properly embedded in
$B\cup Y$.  Let $\Gamma  \subset B \cup Y$ be the union of the arc components
of $B \cap Y$.  Endpoints of $\Gamma$ are the vertices of $\s$ where red, blue,
and yellow faces meet.  By Remark~\ref{rmk:RedDisk} around any vertex of $\Sigma$ that is on the boundary
of a red vertex disk all the colors are red; therefore the vertex disk at an endpoints of $\Gamma$ 
is yellow or blue.

Let $\mathcal{V}$ be the set of vertices of red truncated
normal faces.  We subdivide $\mathcal{V}$ into 3 disjoint sets as follows:
$\mathcal{V}_0$ are vertices that are on the boundary of at least two red faces; 
$\mathcal{V}_+$ are vertices that are on the boundary of
three faces so that one is red, one is yellow, and one is blue; 
$\mathcal{V}_-$ are vertices that are on the boundary of three faces
so that one is red and two are yellow, or one is red and two are blue.   
By construction, $\mathcal{V}_+$ is exactly the set of endpoints of $\Gamma$. 

By construction, at every vertex exactly one face is a vertex disk.
We exchange the colors of the blue vertex disks with the colors of the yellow
vertex disks; let $R'$, $B'$, $Y'$ and  $\Gamma'$ be defined as above, using
the new  coloring.  By Remark~\ref{rmk:RedDisk}, $\mathcal{V}_-$ is exactly the
set of endpoints of $\Gamma'$ (we emphasize that $\mathcal{V}_-$ is the set of
vertices defined above using the original coloring). Hence, by exchanging
colors if necessary, we may assume that the number of endpoints of $\Gamma$ is
at most $\frac{1}{2}|\mathcal{V}|$.    Since every arc of $\Gamma$ has two
distinct endpoints and $\Gamma$ has at most $\frac{1}{2}|\mathcal{V}|$
endpoints, we get that $|\Gamma| \leq \frac{1}{4}|\mathcal{V}|$.

There are at most 16
vertices in $\mathcal{V}$ from the truncated exceptional component, at most 6
from each truncated red triangle, and at most 8 from each truncated red
quadrilateral.  By Lemma~\ref{lem:NumberOfRed} we get:
\begin{eqnarray*}
  |\mathcal{V}| &\leq&  16 + 6\rt + 8 \rqq  \\  %
               &\leq& 16 + 6 (16\tm) + 8(4\tm + 4)  \\  %
               &\leq& 128\tm + 48.%
\end{eqnarray*}
Hence:
$$|\Gamma| \leq \frac{1}{4} |\mathcal{V}| \leq 32 \tm + 12.$$

Let $F_1,\dots,F_k$ be the components of $B \cup Y$ cut open along $\Gamma$
(note that $F_{1},\dots,F_{k}$ are not, in general, faces).
Cutting along $\Gamma$ increases the Euler characteristic by $|\Gamma|$ and 
increases the number of boundary components by at most $|\Gamma|$.  Using
Lemma~\ref{lem:chi} we get:
\begin{eqnarray*}
  \s_{i=1}^k \chi(F_i)  &=& \chi(\cup_{i=1}^k F_i)   \\  %
               &=&   \chi(B \cup Y) + |\Gamma|\\  %
               &\leq& -(108\tm + 38) +  (32 \tm + 12) \\%
               &=& -(76\tm + 26).
\end{eqnarray*}
And using Lemma~\ref{lem:del} we get:
\begin{eqnarray*}
  \s_{i=1}^k |\del F_i|  &=& |\del \cup_{i=1}^k F_i|   \\  %
               &\leq&   |\del (B \cup Y)| + |\Gamma|\\  %
               &\leq& (44\tm + 14) +  (32 \tm + 12) \\%
               &=& 76\tm + 26.
\end{eqnarray*}
Combining these inequalities we get:
\begin{equation}
\label{eq}
  \s_{i=1}^k \chi(F_i) \leq -(\s_{i=1}^k |\del F_i|).
\end{equation}

\begin{pro}
\label{pro:X}  
There exists a pair of pants $X \subset \s$ with the following two properties:
\begin{enumerate}
\item Either $X \subset \mbox{int}(B)$ or
$X \subset \mbox{int}(Y)$ (say the former).
\item The components of $\del X$, denoted by $\alpha$, $\beta$, and $\gamma$,
  are essential in $\s$.
\end{enumerate}
\end{pro}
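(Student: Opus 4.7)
The plan is to use inequality (\ref{eq}) to single out a monochromatic component of positive genus, cap off its inessential-in-$\s$ boundary to reduce to the essential case, and then carve out the pair of pants $X$ directly.

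Property~(1) is immediate. The arcs of $\Gamma$ separate blue from yellow, so each component $F_i$ of $B\cup Y$ cut along $\Gamma$ is monochromatic, and so is any pair of pants $X\subset F_i$. For the choice of component, rewrite $\chi(F_i)+|\del F_i|=2-2g(F_i)$ and apply (\ref{eq}) to obtain $\sum_i(g(F_i)-1)\geq 0$, so some component $F=F_i$ satisfies $g(F)\geq 1$. I will produce $X$ inside $F$.

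The main obstacle is property~(2), since a curve essential in $F$ need not be essential in $\s$. To handle this I cap off as follows. Each boundary component $c\subset\del F$ that is inessential in $\s$ bounds a disk $D_c\subset\s$, and since $F$ has positive genus (so is not a disk), $D_c\cap F=c$. A nesting argument shows the disks $\{D_c\}$ are pairwise disjoint: nested disks would force one capping curve to lie in the interior of another's disk, hence in $\s\setminus F$, contradicting $c\subset\del F$. Let $\hat F=F\cup\bigcup D_c\subseteq\s$. Then $g(\hat F)=g(F)\geq 1$, every component of $\del\hat F$ is essential in $\s$, and comparing Euler characteristics (via Lemma~\ref{lem:chi} and the hypothesis $g(\s)\geq\bdd$) shows $\hat F\subsetneq\s$, so $\del\hat F\neq\emptyset$. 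The key feature of $\hat F$ is that any simple closed curve essential in $\hat F$ is essential in $\s$: an inessential-in-$\s$ curve of $\hat F$ would bound a disk in $\s$ not lying in $\hat F$, and that disk would contain a component of $\del\hat F$, contradicting essentiality of the boundary of $\hat F$ in $\s$.

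Finally, pick $\alpha\in\del\hat F\subset\del F$ (essential in $\s$) and a non-separating simple closed curve $\delta\subset F$ disjoint from $\alpha$; since non-separating curves are essential in $\hat F$, $\delta$ is essential in $\s$. Connect $\alpha$ and $\delta$ by an embedded arc $a\subset F$ chosen so that $X:=N(\alpha\cup a\cup\delta)\subset F$ is a pair of pants (rather than a once-holed torus, which is ruled out by an ``untwisted'' choice of $a$ in the orientable surface $F$). Its three boundary components are $\alpha$, a parallel copy of $\delta$, and a third curve $\beta$ conjugate in $\pi_1(F)$ to $(\alpha\delta)^{-1}$. The first two are essential in $\s$ by construction, and the freedom afforded by $g(F)\geq 1$ and $|\del\hat F|\geq 1$ in choosing $\alpha$, $\delta$, and $a$ permits arranging $\beta$ to be essential in $\hat F$, hence in $\s$. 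This gives the desired pair of pants.
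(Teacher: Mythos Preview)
Your argument has a genuine gap at the very first step: the components $F_i$ are \emph{not} monochromatic.  Recall that $\Gamma$ is defined to be only the \emph{arc} components of $B\cap Y$; the circle components of $B\cap Y$ are not cut.  Thus a component $F_i$ of $(B\cup Y)$ cut along $\Gamma$ may well contain closed curves of $B\cap Y$ in its interior, separating blue regions from yellow ones.  (The paper says this explicitly: ``By construction, $(B\cap Y)\cap\mbox{int }F_i$ consists of simple closed curves.'')  So your Property~(1) is not ``immediate'', and in fact is the heart of the matter.  The paper handles this by splitting those interior circles into those essential in $\s$ (called $\mathcal{E}$) and those inessential ($\mathcal{I}$); it cuts further along $\mathcal{E}$ to get a piece $F'$ with $\chi(F')<0$ and $\partial F'$ essential in $\s$, and then observes that the remaining $\mathcal{I}$-curves and capping disks can all be pushed into an annular collar of a single boundary component of $F'$, leaving a monochromatic $F''$ with $\chi(F'')<0$ and essential boundary.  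This extra cutting-and-sweeping is exactly what your proof is missing.

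There is a second, smaller gap: your claim that $\hat F\subsetneq\s$ is unjustified.  Lemma~\ref{lem:chi} gives an \emph{upper} bound on $\chi(B\cup Y)$, not a lower bound, so it does not prevent a single $F_i$ from carrying all the genus of $\s$.  Indeed the paper explicitly allows $\partial F=\emptyset$ (i.e.\ $F'=\s$) and treats it separately, using a separating essential curve in place of a boundary component.  Your construction of $X$ relies on picking $\alpha\in\partial\hat F$, so this case is not covered.
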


\begin{proof}
By Inequality~(\ref{eq}) above, for some $i$, $\chi(F_i) \leq -|\del F_i|$;
equivalently, $g(F_i) \geq 1$.  Fix such $i$.  By construction, $(B \cap Y)
\cap \mbox{int}(F_i)$ consists of simple closed curves; see
Figure~\ref{fig:F}.  Let $\mathcal{E}$ (resp. $\mathcal{I}$) denote the curves
of $(B \cap Y) \cap \mbox{int}F_i$ that are essential (resp. inessential) in $\s$. 
\begin{figure}[htbp]
     \centerline{  \includegraphics[width=4in]{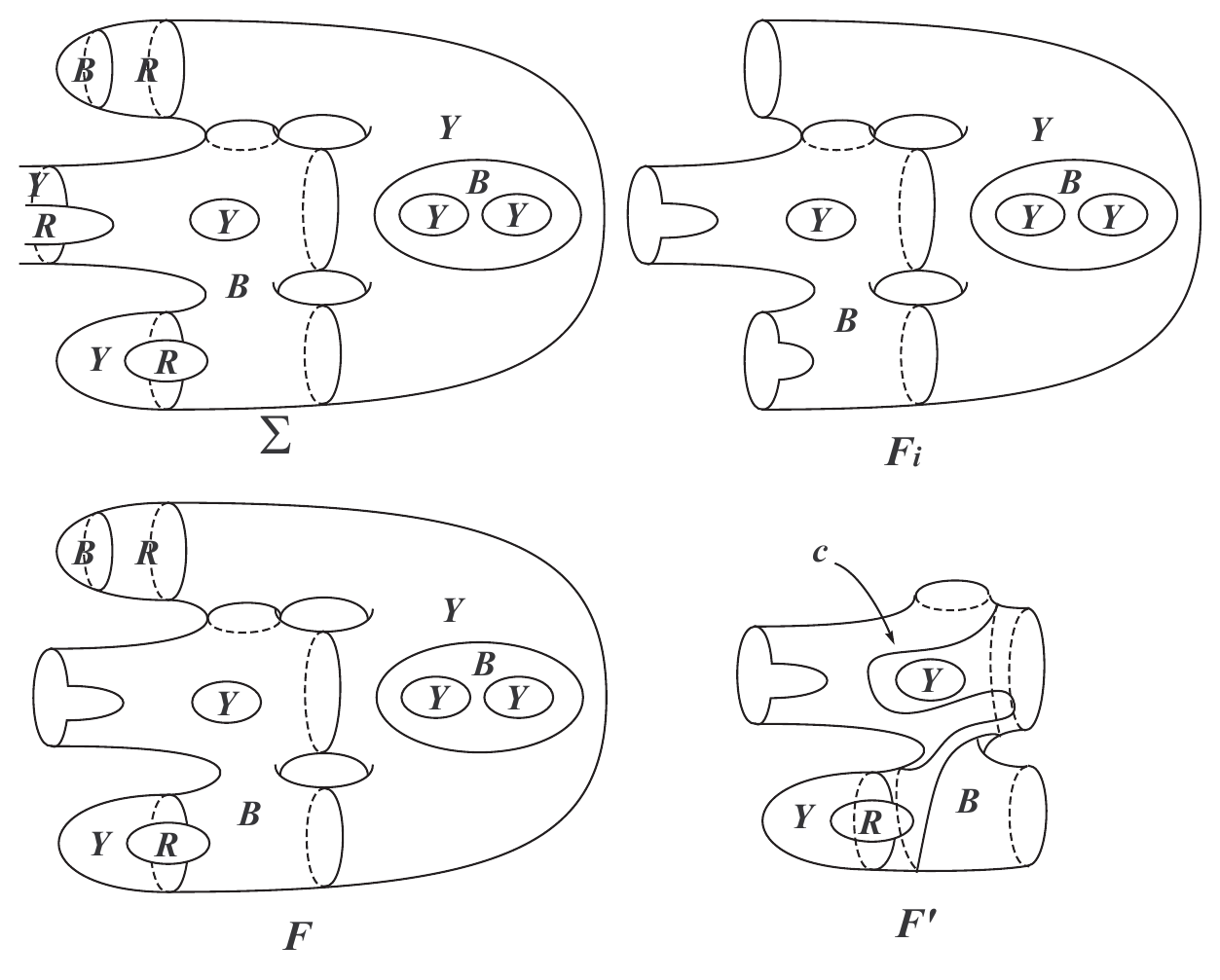}}
  \caption{}
  \label{fig:F}
\end{figure}

Let $\Delta$ be the union of the components of $\mbox{cl}(\s \setminus F_i)$
that are  disks (possibly, $\Delta = \emptyset$).  Let $F = F_i \cup \Delta$.
By construction, every component of $\del F$ is essential in $\s$ (possibly,
$\del F = \emptyset$).  Thus, a closed curve of $F$ is essential in $\s$ if
and only if it is essential in $F$.   
Since $g(F) = g(F_{i}) > 0$, if $\del F \neq \emptyset$ then $\chi(F) < 0$; if,
on the other hand, $\del F = \emptyset$, then $F = \s$
and in particular, $g(F) = g(\s) \geq 76t + 26 >1$;  
we conclude that in either case $\chi(F) < 0$.
Thus some component of
$F$ cut open along $\mathcal{E}$, denoted by $F'$, has $\chi(F') < 0$.  Note that
every curve of $\del F'$ is essential in $\s$.  By construction, $(B \cap Y)
\cap \mbox{int}F' \subset \mathcal{I}$.  Let $\Delta'$ be the union of the
disks bounded by outermost curves of $\mathcal{I} \cap F'$ and the disks
$\Delta \cap F'$.  Note that $\Delta' \subset \mbox{int}F'$ consists of disks,
and $F' \setminus \Delta'$ is entirely blue or yellow; in Figure~\ref{fig:F},
$\Delta'$ consists of two disks, one of each kind.

Assume first that $\del F' \neq \emptyset$.  Let $c \subset F'$ be a curve,
parallel to a component of $\del F'$, that decomposes $F'$ as $A'' \cup_c
F''$, where $A''$ is an annulus.  By isotopy of $c$ in $F'$ we may assume that
$\Delta' \subset A''$.    We see that $F''$ is entirely blue or yellow,
$\chi(F'') = \chi(F') < 0$, and $\del F''$ is essential in $\s$.

Next assume that $\del F' = \emptyset$ (that is, $F' = \s$).  Let $c$ be a
separating, essential curve in $F'$.  By isotopy of $c$ we may assume that
$\Delta'$ is contained in one 
component of $F'$ cut open along $c$.  Let $F''$ be the other component.  We
conclude that in this case too, $F''$ is entirely blue or yellow, $\chi(F'') <
0$, and $\del F''$ is essential in $\s$.

Let $\alpha$, $\beta$, and $\gamma \subset \mbox{int}(F'')$ be three curves 
that are essential in 
$F''$ (and hence in $\s$) and co-bound a pair of pants, denoted by $X$, in
$F''$.  It is easy to see that $X$,  $\alpha$, $\beta$, and $\gamma$  have the properties listed in
Proposition~\ref{pro:X}. 
\end{proof}

Since $X \subset \mbox{int}(B)$ it is on the boundary of the total space of an $I$-bundle in $V_i$
($i=1,2$).  The other component of the associated $\del I$-bundle is a pair of
pants denoted by $X_i$.  The components of $\del X_{i}$ are denoted by $\alpha_i$, $\beta_i$, and
$\gamma_i$ so that $\alpha_{i}$ is parallel to $\alpha$, 
$\beta_{i}$ is parallel to $\beta$, and $\gamma_{i}$ is parallel to $\gamma$.  Since
$X \subset \mbox{int}(B)$, every point of $X_i$ is yellow or red; we conclude that $X
\cap X_i = \emptyset$.  
Hence the $I$-bundle in $V_{i}$ is trivial.
The annulus extended from $\alpha$ to $\alpha_{i}$ 
(resp. $\beta$ to $\beta_{i}$, $\gamma$ to $\gamma_{i}$) in $V_i$ is denoted by $A_i$ (resp. $B_i$, $C_i$).  
By construction, these annuli are embedded.
Note that $X_{1} \cap X_{2} \neq \emptyset$ is possible.
%
%
%
%

\begin{lem}
\label{lem:assu1}
One of the following holds:
\begin{enumerate}
\item After renaming if necessary,
$A_1 \subset V_1$ and $B_2 \subset
  V_2$ are not boundary parallel, and $A_2 \subset V_2$, $B_1 \subset V_1$,
  and $C_1 \subset V_1$ are boundary parallel.
\item $d(\s) \leq 2$.  
\end{enumerate}
\end{lem}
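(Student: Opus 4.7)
My plan is a case analysis on which of the six annuli $A_1, A_2, B_1, B_2, C_1, C_2$ are boundary parallel. The dichotomy arises from two sources: Lemma~\ref{lem:distance} forces $d(\s)\leq 2$ in one class of configurations, and the genus bound $g(\s)\geq\bdd$ rules out another.

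The first reduction is immediate. If some pair, say $(A_1,A_2)$, consists of two non-boundary-parallel annuli, then the common essential curve $\alpha\subset A_1\cap A_2$ satisfies the hypotheses of Lemma~\ref{lem:distance}(1), giving $d(\s)\leq 2$. The same argument handles the pairs $(B_1,B_2)$ and $(C_1,C_2)$, so I may assume each of the three pairs contains at least one boundary-parallel annulus.

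The crucial step is to show that each side $V_i$ contains at least one non-boundary-parallel annulus among $\{A_i,B_i,C_i\}$. Suppose to the contrary that $A_1$, $B_1$, $C_1$ are all boundary parallel in $V_1$, and let $A_1',B_1',C_1'\subset\s$ be the corresponding parallelism annuli. Since $\alpha_1,\beta_1,\gamma_1$ arise as pushoffs of $\alpha,\beta,\gamma$ through the $I$-bundle between $X$ and $X_1$, each parallelism annulus is a narrow strip on $\s$ near the corresponding pair of parallel boundary circles, and the three can be chosen pairwise disjoint and disjoint from $\mathrm{int}(X)\cup\mathrm{int}(X_1)$. Then
\[
S':=X\cup A_1'\cup B_1'\cup C_1'\cup X_1
\]
is an embedded closed connected subsurface of $\s$, built from two pairs of pants joined along three annuli, so $\chi(S')=-2$ and $S'$ has genus $2$. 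Since $\s$ is closed and connected, $S'=\s$, forcing $g(\s)=2$ and contradicting $g(\s)\geq\bdd$.

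Combining these reductions, each pair has at most one non-boundary-parallel annulus and each side has at least one, so the total count of non-boundary-parallel annuli is $2$ or $3$ with at least one on each side. A short bookkeeping check shows that after permuting the labels $\{A,B,C\}$ and possibly swapping $V_1\leftrightarrow V_2$, every such configuration reduces to exactly conclusion~(1): $A_1$ and $B_2$ are non-boundary-parallel, $A_2$, $B_1$, and $C_1$ are boundary parallel, and $C_2$ is unconstrained. I expect the genus step to be the main obstacle: one must verify that the parallelism annuli can be chosen simultaneously disjoint so that $S'$ is genuinely an embedded closed subsurface of $\s$, rather than collapsing or self-intersecting.
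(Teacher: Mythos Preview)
Your overall strategy is the paper's: use Lemma~\ref{lem:distance}(1) when both annuli over a given curve are non-boundary-parallel, derive a genus-two contradiction when all three annuli on one side are boundary parallel, then bookkeep. The bookkeeping is fine.

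The gap is exactly the one you flag. The assertion that each parallelism annulus $A_1', B_1', C_1'$ is ``a narrow strip on $\s$ near the corresponding pair of parallel boundary circles'' is unjustified and can fail: a priori the annulus on $\s$ to which $A_1$ is parallel could be large enough to contain $X_1$, and then $S'$ is not embedded. The paper sidesteps this by working with the annuli $A_1, B_1, C_1 \subset V_1$ themselves rather than their traces on $\s$. These are pairwise disjoint by Lemma~\ref{lem:embedded}(2) (since $\alpha,\beta,\gamma$ are), so their solid-torus regions of parallelism in $V_1$ are either pairwise disjoint or nested. Nesting is then ruled out directly: if, say, the region for $B_1$ sat inside that for $A_1$, then either $X$ would lie in the annulus on $\s$ parallel to $A_1$ (forcing $\gamma$ inessential, contradicting Proposition~\ref{pro:X}(2)), or $X$ would contain both boundary curves of $A_1$ (contradicting $\alpha_1 \cap X = \emptyset$, established just before the lemma). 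With the regions of parallelism disjoint, $X \cup A_1 \cup B_1 \cup C_1 \cup X_1$ is an embedded closed genus-two surface in $M$ isotopic to $\s$, giving the desired contradiction.
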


\begin{proof}
We claim that one of $A_i$, $B_i$ or $C_i$ is not boundary
parallel in $V_i$ ($i=1,2$).  Suppose, for a contradiction, that $A_i$, $B_i$,
$C_i$ are  all boundary parallel.   Let $\widetilde{A}_{i} \subset V_{i}$ 
be the annulus that $A_{i}$ is parallel to.  Since $X$ is an essential
pair of pants it is not contained in $\widetilde{A}_{i}$; it is
easy to see that the intersection of the region of parallelism between $A_{i}$
and $\widetilde{A}_{i}$ and the trivial $I$-bundle in $V_{i}$ is exactly
$A_{i}$; similarly we treat $B_{i}$ and $C_{i}$.  We see that $V_{i}$ is 
homeomorphic to the trivial $I$-bundle, and hence is a genus 2 handlebody.
This contradicts our assumption that  $g(\s) \geq \bdd > 2$.


Therefore one of $A_1$, $B_1$ or $C_1$ is not boundary parallel, and after
renaming if necessary we may assume it is $A_1$.  We may assume $A_2$ is
boundary parallel, for otherwise $d(\s) \leq 2$ by Lemma~\ref{lem:distance}~(1).  
Similarly, one of $A_2$, $B_2$ or $C_2$ is not boundary parallel,
after renaming if necessary we may assume it is $B_2$, while $B_1$ is boundary 
parallel.  Finally by  Lemma~\ref{lem:distance}~(1) we may assume that $C_1$
or $C_2$ is boundary parallel, say $C_1$.
\end{proof}

\begin{lem}
\label{lem:assu2}  
One of the following holds:
\begin{enumerate} 
\item $\alpha_1$, $\beta_2$ and $\gamma_2$ are essential in $\s$, and 
  $\alpha$ is not isotopic in $\s$ to $\alpha_1$, $\beta$ or $\gamma$. 
\item $d(\s) \leq 2$.  
\end{enumerate}
\end{lem}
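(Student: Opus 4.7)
Assuming Conclusion~(1) of Lemma~\ref{lem:assu1}, the plan is to show that whenever any of the essentiality or non-isotopy conditions in Conclusion~(1) of the present lemma fails, the data required by Lemma~\ref{lem:distance} can be constructed, giving $d(\s) \leq 2$.  There are six failure modes to treat: three inessentiality cases ($\alpha_1$, $\beta_2$, or $\gamma_2$ inessential in $\s$) and three isotopy cases ($\alpha$ isotopic in $\s$ to $\alpha_1$, $\beta$, or $\gamma$).  In each case I build either a meridian disk (to invoke Lemma~\ref{lem:distance}(2)) or a pair of non-boundary-parallel annuli with a common essential boundary curve (to invoke Lemma~\ref{lem:distance}(1)).

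For the inessentiality cases, suppose $\alpha_1$ bounds a disk $D \subset \s$.  Gluing $D$ to $A_1$ along $\alpha_1$ and pushing slightly into $V_1$ produces a disk $D_1 \subset V_1$ with $\del D_1 = \alpha$; this is a meridian disk since $\alpha$ is essential in $\s$ by Proposition~\ref{pro:X}.  Because $\beta$ is essential, disjoint from $\alpha$ (both lie in $\del X$), and $\beta \subset \del B_2$ with $B_2 \subset V_2$ non-boundary parallel by Lemma~\ref{lem:assu1}(1), Lemma~\ref{lem:distance}(2) yields $d(\s) \leq 2$.  The case $\beta_2$ inessential is symmetric, producing a meridian disk in $V_2$ to pair with $A_1$ and the essential curve $\alpha$.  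For $\gamma_2$ inessential, first observe that $C_2$ cannot be boundary parallel (else $\gamma_2 \sim \gamma$ in $\s$, forcing $\gamma_2$ to be essential); the same construction then gives a meridian disk $D_2 \subset V_2$ with $\del D_2 = \gamma$, which pairs with $A_1$ and $\alpha$.

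For the isotopy cases, use sub-annuli of $\s$.  If $\alpha \sim \alpha_1$, let $A' \subset \s$ be the annulus cobounded by $\alpha$ and $\alpha_1$; then $T = A_1 \cup A'$ is a torus embedded in $V_1$.  An incompressible torus in the compression body $V_1$ would be boundary parallel to a torus sub-surface of $\del V_1$; no such sub-surface of $\s$ exists because $g(\s) \geq 2$, and parallelism into a torus component of $\del_- V_1$ is ruled out by $A_1$ not being boundary parallel.  Hence $T$ compresses in $V_1$, and an outermost-disk argument places the compression disk's boundary in either $\mbox{int}(A')$ or $\mbox{int}(A_1)$; in either sub-case we extract a meridian disk of $V_1$ whose boundary is isotopic in $\s$ to $\alpha$, and Lemma~\ref{lem:distance}(2) with $B_2$ and $\beta$ concludes.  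If $\alpha \sim \beta$, let $A'' \subset \s$ be the annulus between them; amalgamating $A_1$ with $A''$ (pushed into $V_1$) produces an annulus $A_1' \subset V_1$ with boundary $\alpha_1 \cup \beta$.  If $A_1'$ were boundary parallel, undoing the amalgamation would show $A_1$ is boundary parallel, contradicting Lemma~\ref{lem:assu1}(1).  Lemma~\ref{lem:distance}(1), applied to $A_1'$ and $B_2$ (sharing the essential curve $\beta$), then gives the conclusion.

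The case $\alpha \sim \gamma$ is the main obstacle.  When $C_2$ is non-boundary parallel, the argument for $\alpha \sim \beta$ adapts verbatim, using $C_2$ in place of $B_2$ and common curve $\gamma$.  When $C_2$ is boundary parallel, the hypotheses force $\alpha$, $\alpha_2$, $\gamma$, $\gamma_2$ to be mutually isotopic in $\s$.  Amalgamating $A_2$ and $C_2$ with the two sub-annuli of $\s$ cobounded by $\alpha \cup \gamma$ and by $\alpha_2 \cup \gamma_2$ yields a torus in $V_2$ which, by the same compression argument as above, produces a meridian disk of $V_2$ whose boundary is isotopic in $\s$ to $\alpha$.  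Paired with $A_1$ and the essential curve $\beta$, Lemma~\ref{lem:distance}(2) closes this final case.
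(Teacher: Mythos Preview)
Your treatment of the three inessentiality cases agrees with the paper's.  The divergence is in the isotopy cases, and here the paper takes a completely different (and much shorter) route: rather than building disks or annuli, it uses the second pair of pants $X_1\subset\Sigma$ together with the hypothesis that $B_1,C_1$ are boundary parallel to show that each of the isotopies $\alpha\sim\alpha_1$, $\alpha\sim\beta$, $\alpha\sim\gamma$ is outright \emph{impossible}.  The point is that $X\cup\widetilde B_1\cup\widetilde C_1\cup X_1\subset\Sigma$ is a twice--punctured torus with boundary $\alpha\cup\alpha_1$; if $\alpha\sim\alpha_1$ the complementary annulus forces $g(\Sigma)=2$, and if $\alpha\sim\beta$ or $\alpha\sim\gamma$ the annulus they cobound would have to contain the essential pair of pants $X$ or $X_1$.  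Everything happens on $\Sigma$.

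Your torus--compression route has a genuine gap.  In the case $\alpha\sim\alpha_1$ you form $T=A_1\cup A'\subset V_1$ and claim that parallelism of $T$ into a torus component $T_0\subset\partial_- V_1$ would force $A_1$ to be boundary parallel.  This is false in general: take $V_1=(T^2\times I)\,\natural\, H$ with the $\natural$--disk placed in one of the two complementary annuli of a pair of parallel essential curves $\alpha,\alpha_1\subset T^2\times\{1\}$; the ``long--way'' annulus $A_1$ from $\alpha$ to $\alpha_1$ is then not boundary parallel in $V_1$ (the handlebody blocks one side, and on the other side $A_1\cup A'$ bounds no solid torus since it is incompressible), yet $A_1\cup A'$ is parallel to $T_0=T^2\times\{0\}$.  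Since the paper explicitly allows $\partial M\neq\emptyset$, the compression bodies may have torus boundary and this situation cannot be dismissed.  The same defect recurs in your final sub--case ($\alpha\sim\gamma$ with $C_2$ boundary parallel), which has the additional problem that the four parallel curves $\alpha,\alpha_2,\gamma,\gamma_2$ can be ordered so that the $\alpha$--$\gamma$ and $\alpha_2$--$\gamma_2$ sub--annuli of $\Sigma$ overlap, so the ``torus'' you build need not be embedded.  Your argument for $\alpha\sim\beta$ (slide $\partial A_1$ across the $\Sigma$--annulus and apply Lemma~\ref{lem:distance}(1) with $B_2$) is fine, but the two problematic cases leave the proof incomplete.
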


\begin{figure}[htbp]
     \centerline{  \includegraphics[width=2.5in]{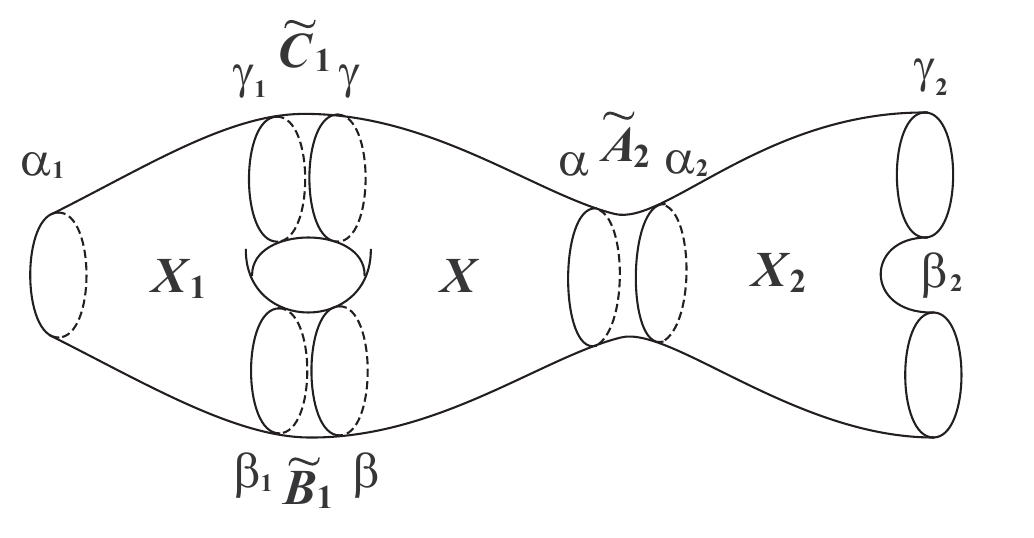}}
  \caption{}
  \label{fig:X}
\end{figure}

\begin{proof}
We may assume that Conclusion~(1) of Lemma~\ref{lem:assu1} holds;
thus $A_{2}$, $B_{1}$ and $C_{1}$ are boundary parallel.
We denote by $\widetilde{A}_2, \widetilde{B}_1, \widetilde{C}_1 \subset \s$ 
the annuli to which $A_{2}$, $B_{1}$, and $C_{1}$ are parallel (respectively).
See Figure~\ref{fig:X}, where $X_1 \cap X_2 = \emptyset$, but this need not be the 
case. 

If $\alpha_1$ is inessential in $\s$, then we may cap $A_1$ off, and after a
small isotopy we obtain a meridian disk $D_1 \subset V_1$ with $\del D_1 = \alpha$.
Using $D_1$ and $B_2$, Lemma~\ref{lem:distance} (2) shows that $d(\s) \leq 2$.
Similarly if $\beta_2$ (resp. $\gamma_2$) is inessential in $\s$ then $\beta$
(resp. $\gamma$) bounds a meridian disk $D_2 \subset V_2$.  Using $D_2$ and $A_1$,
Lemma~\ref{lem:distance} (2) shows that $d(\s) \leq 2$.

If $\alpha$ is isotopic to $\alpha_1$ in $\s$ then either the annulus
connecting the two contains $X$ or $g(\s) = 2$.  The former is impossible
since $X$ is an essential pair of pants and the
latter contradicts the assumption $g(\s) \ge 76\tm + 26 > 2$.

Let $c \subset \s$ be a closed curve 
constructed by pasting together four arcs, the first connecting 
$\beta$ to $\gamma$ in $X$, the second connecting $\gamma$ to 
$\gamma_{1}$ in $\widetilde{C}_{1}$, the third connecting
$\gamma_{1}$ to $\beta_{1}$ in $X_{1}$, and the final arc 
connecting $\beta_{1}$ to $\beta$ in $\widetilde{B}_{1}$.
Since $X \cap X_{1} = \emptyset$, we have
$|c \cap \beta| = |c \cap \gamma| = 1$.
By construction $|c \cap \alpha| = 0$.  Therefore $\alpha$ is not isotopic in $\s$
to either $\beta$ or $\gamma$.
\end{proof}

\section{Proof of Theorem~\ref{thm}}
\label{sec:proof}

With notation as in Section~\ref{sec:X} we assume, as we may by Lemma~\ref{lem:assu1}, that
$A_1$ and $B_2$ are not  boundary parallel and that $A_2$, $B_1$, and $C_1$
are boundary parallel.  We assume, as we may by Lemma~\ref{lem:assu2}, that
$\alpha_1$, $\beta_2$ and $\gamma_2$ are essential in $\s$ and $\alpha$ is not
isotopic in $\s$ to $\alpha_1$, $\beta$ or $\gamma$.

The proof is divided into the following two cases:

\medskip

\noindent {\bf Case One. $\alpha_1$ can be isotoped to be disjoint from
  $X_2$.} 
Let  $\widetilde{A}_2$, $\widetilde{B}_1$, and $\widetilde{C}_1$ be as in
Lemma~\ref{lem:assu2}. 
Let $T \subset \s$ be the twice punctured torus $X \cup \widetilde{B}_1 \cup
\widetilde{C}_1 \cup X_1$.  Isotope $\alpha_1$ so that $\alpha_1 \cap X_2 =
\emptyset$.  After this isotopy, $X_2 \cap (\alpha_1 \cup X) = \emptyset$.
Hence either $X_2 \subset (X_1 \cup \widetilde{B}_1 \cup \widetilde{C}_1)$ or
$X_2 \cap T = \emptyset$.  In the former case, $\alpha_2 \subset (X_1 \cup
\widetilde{B}_1 \cup \widetilde{C}_1)$.  Since $\alpha$ is isotopic to
$\alpha_2$ in $\s$, $\alpha$ is isotopic into $X_1 \cup \widetilde{B}_1 \cup
\widetilde{C}_1$.  By Proposition~\ref{pro:X}(2) $\alpha$ is essential,  and
hence $\alpha$ is isotopic to a component of $\del(X_1 \cup \widetilde{B}_1
\cup \widetilde{C}_1) = \alpha_1 \cup \beta \cup \gamma$, contradicting
our assumptions.  

Hence we may assume that $X_2 \cap T = \emptyset$.  Let $D_1 \subset V_1$ be a meridian disk
obtained by compressing or boundary compressing $A_1$.  After a small isotopy
we may assume that $\del D_1 \cap \del A_1 = \del D_1 \cap (\alpha \cup
\alpha_1) = \emptyset$, and hence either $\del D_1 \subset T$ (hence $\del D_1
\cap \beta_2 = \emptyset$) or  $\del D_1 \cap T = \emptyset$ (hence $\del D_1
\cap \beta = \emptyset$).  Thus $D_{1}$ is disjoint from at least one component
of $\del B_{2}$; by Lemma~\ref{lem:distance}~(2), $d(\s) \leq 2$, proving
Theorem~\ref{thm} in Case One.

\bigskip


\bigskip
\noindent Before proceeding to Case Two we refine our colorings.  
Let $\mathcal{F}$ be an $I$-equivalent family of faces, ordered as $F_{1},\dots,F_{n}$
so that $F_{i}$ is $I$-adjacent to $F_{i+1}$ ($i=1,\dots,n-1$).  Then the red faces
are $F_{1}$, $F_{2}$, $F_{n-1}$, and $F_{n}$.  We color $F_{1}$ and $F_{n}$
dark red.  If $n \ge 3$ we color $F_{2}$ and $F_{n-1}$ light red.

Clearly, a face is  $I$-adjacent to two distinct faces if and only if it is 
colored blue, yellow, or light red.  Let $p$ be a point on such a face.  Then
$p$ is on the boundary of two $I$-fibers, on the $V_1$ and $V_2$ sides. Denote
the other endpoints of these fibers by $p_1$ and $p_2$.  By construction we
see that the colors at $p$, $p_1$ and $p_2$ 
fulfill the conditions in Table~\ref{table:colors}.

\begin{table}[htbp]
  \centering
  \begin{tabular}{|l|l|l} \hline %
$p$ & $p_1$, $p_2$  \\ \hline %
blue     &   yellow or light red \\ \hline %
yellow   &   blue or light red \\ \hline %
light red & one is dark red and the other can be any color \\ \hline%
  \end{tabular}\\[3pt]
  \caption{Colors of $I$-adjacent points}
\label{table:colors}
\end{table}

\begin{nota}
\label{nota:LightRed}
Every light red face is $I$-equivalent to a dark red face on one side.  On the
other side it is $I$-equivalent to a face that may be blue, yellow, light red
or dark red.  This decomposes the set of light red points into four disjoint
subsets.  We label a light red face that is $I$-equivalent to a blue
(resp. yellow) face by \lrb\ (resp. \lry).
\end{nota}

\medskip

\noindent {\bf Case Two.  $\alpha_1$ cannot be isotoped to be disjoint from
  $X_2$.}  
Since  $\alpha \subset \mbox{int}(B)$, each point of $\alpha_1$
is yellow or light red.  Hence $\alpha_1$ bounds $I$-bundles on
both sides.  Let $A_{1,2}$ be the be the (possibly immersed) 
$I$-bundle obtained by extending $\alpha_1$
into $V_2$, and denote $\del A_{1,2} \setminus \alpha_1$ by $\alpha_{1,2}$;
see Figure~\ref{fig:a12}.  
\begin{figure}[htbp]
     \centerline{  \includegraphics[width = 50mm]{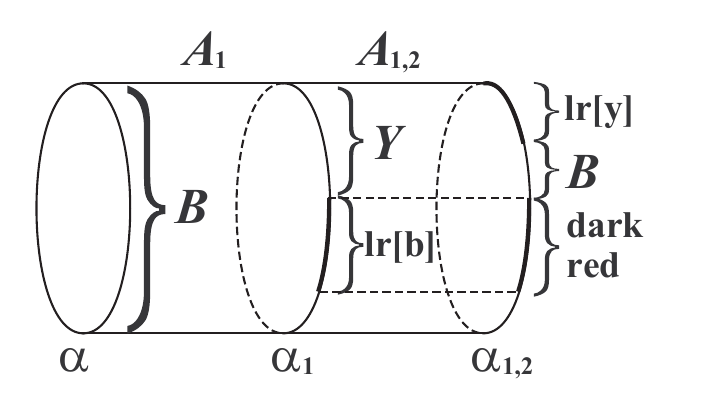}}
  \caption{}
  \label{fig:a12}
\end{figure}
Since every  point of
$\alpha_1$ is yellow or light red and labeled \lrb, every point  of
$\alpha_{1,2}$ is blue, light red and labeled \lry, or dark red  (see
Table~\ref{table:colors} and Notation~\ref{nota:LightRed}).  Thus $\alpha_1
\cap \alpha_{1,2} = \emptyset$, and we see that $A_{1,2}$ is 
trivial $I$-bundle, that is,
an embedded annulus.

Since $X_2$ and $X$ co-bound an $I$-bundle, every point of $X_2$ is yellow or
light red and labeled \lrb.  Thus $\alpha_{1,2} \cap X_2 = \emptyset$.  By
assumption $\alpha_1$ cannot be isotoped off $X_2$.  Hence $\alpha_1$ is not
isotopic to  $\alpha_{1,2}$; this implies that $A_{1,2}$ is not boundary
parallel.  By assumption $A_1$ is not boundary parallel and
$\alpha_1$ is essential in $\s$.  Applying Lemma~\ref{lem:distance}~(1) to
$A_1$, $A_{1,2}$, and $\alpha_1$ we conclude that $d(\s) \leq 2$, completing
the proof of Theorem~\ref{thm}.

\nocite{rr}

\bibliographystyle{amsplain} 

\bibliography{linear}

\end{document}